\newtheorem{thm}{Theorem}[section]
\newtheorem{corollary}[thm]{Corollary}
\newtheorem{proposition}[thm]{Proposition}
\newtheorem{theorem}[thm]{Theorem}
\newtheorem{lemma}[thm]{Lemma}
\theoremstyle{definition}
\newtheorem{definition}[thm]{Definition}
\theoremstyle{remark}
\newtheorem{remark}[thm]{Remark}
\numberwithin{equation}{section}
\def\N{\mathbb{N}}
\def\R{\mathbb{R}}
\def\Z{\mathbb{Z}}
\def\Tn2{{\mathcal T}^{(n)}_2}
\def\w{\omega}
\begin{document}
	\title[WELL-POSEDNESS FOR CAUCHY FRACTIONAL PROBLEMS]{WELL-POSEDNESS FOR CAUCHY FRACTIONAL PROBLEMS INVOLVING DISCRETE CONVOLUTION OPERATORS}

	\author[Jorge Gonz\'alez-Camus]{Jorge Gonz\'alez-Camus}
	\address{Departamento de Matem\'atica, Facultad de Ciencias Naturales, Matemática y del Medio Ambiente, Universidad Tecnológica Metropolitana, Santiago, Chile.}
	\email{j.gonzalezc@utem.cl}

	\thanks{}
	
	\keywords{Caputo fractional derivative; discrete fractional Laplacian; discrete fractional operators; fundamental solutions;  Wright and Mittag-Leffler functions; discrete convolution operator, Markovian Semigroup}
	
	\subjclass[2010]{35R11, 35A08, 39A12, 47D07}

	\maketitle
	
	\begin{abstract} This work is focused on establishing sufficient conditions to guarantee the well-posedness of the following nonlinear fractional semidiscrete model  
		\begin{equation*}
			\begin{cases}  \mathbb D^\beta_t u(n,t)= B u(n,t) + f(n-ct,u(n,t)),\, &n\in\mathbb{Z}, \;t>0,\\
				u(n,0)=\varphi(n),\; &n\in\mathbb{Z},\\
			\end{cases}
		\end{equation*}
		under the assumptions that $\beta \in (0,1]$, $c>0$ some constant, $B$ is a discrete convolution operator with kernel $b\in\ell^1(\Z)$, which is the infinitesimal generator of the Markovian $C_0$-semigroup and suitable nonlinearity $f$. We present results concerning the existence and uniqueness of solution, as well as establishing a comparison principle of solutions according to respective initial values.
	\end{abstract}

	\section{Introduction}
	
	This work is devoted to the study of fractional nonlinear models whose spatial variable lies in the one dimensional mesh $\mathbb{Z}$ (also called one dimensional infinite lattice)
	\begin{equation}\label{Mainequation}
		\begin{cases}  \mathbb D^\beta_t u(n,t)= B u(n,t) + f(n-ct,u(n,t)), &n\in\mathbb{Z}, \;t>0,\\
			u(n,0)=\varphi(n),\; &n\in\mathbb{Z},\\
		\end{cases}
	\end{equation}
	where $\beta \in (0,1]$ is a real number, $c>0$ some positive constant, the operator $Bh(n)=(b \ast h)(n)$, with $b$ belonging to Banach Algebra $\ell^{1}(\mathbb{Z})$ being an infinitesimal generator of a Markovian semigroup and $f$ and $\varphi$  suitable functions, $p\in [1,\infty]$ and $\mathbb{D}_t^{\beta}$ denotes the \emph{Caputo fractional derivative} 
	\begin{equation*}\label{caputo}
		\mathbb{D}_t^{\beta}v(t)=\frac{1}{\Gamma(1-\beta)}\int_{0}^{t}(t-s)^{-\beta}v^{\prime}(s)ds,
	\end{equation*}
	for $t>0$ and $\alpha>0$, and $v$ a differentiable function.
	
	Firstly, we consider the linear version of \eqref{Mainequation}
	\begin{equation}\label{Main1}
		\begin{cases}  \mathbb D^\beta_t u(n,t)= B u(n,t) + f(n,t), &n\in\mathbb{Z}, \;t>0,\\
			u(n,0)=\varphi(n),\; &n\in\mathbb{Z},\\
		\end{cases}
	\end{equation}
	where $Bf=b\ast f$ is a discrete convolution operator over $\ell^p(\Z), \ p\in [1,\infty]$.  This system generalizes several interesting cases that can be found in the literature. For instance, for the \emph{discrete fractional laplacian}, denoted by $B=(-\Delta_d)^{\alpha}$, we obtain several semidiscrete models:
	\begin{itemize}
		\item For $\alpha=\beta=1$ we get the \emph{semidiscrete heat equation} (see \cite{CGRTV}).
		\item For $\alpha=\frac{1}{2},\beta=1$ we get the \emph{semidiscrete Poisson equation} (see \cite{CGRTV}).
		\item For $\alpha=1,\beta=\frac{1}{3}$ we get the \emph{semidiscrete Airy equation} (see \cite{VaSo2010}). 
	\end{itemize}  
	Similarly, for the \emph{fractional backward/forward Euler operator}, denoted by $B=(-\nabla)^{\alpha}/\Delta^{\alpha}$, for $\alpha=\beta=1$, we get the linear \emph{fractional semidiscrete transport equation} (see \cite{ALT}).
	Both operators have received special attention, due to the numerous applications in which they are involved, mainly diffusive and transport models. We refer recent references \cite{ALT, CGRTV, GKLW, GLM, KLW3, LiRo}.
	
	Semidiscrete equations have become an important focus of study in the last decade, due to their many applications in diverse areas of knowledge. In the literature many examples can be found. For instance, in chemistry, \eqref{Main1} describes the flow of a chemical in an infinite system of tanks arranged in a row with neighboring tanks connected by a pair of pipes \cite{Sl13}. In probability theory, for the case $B =\Delta_d$, the value, for $u$ a discrete harmonic function, $u(n,t)$  describes the movement of a particle that jumps either to the adjacent left point or to the adjacent right point with probability $\frac{1}{2}$ \cite{Stinga}. In transport theory, \eqref{Main1} describes the dynamics of an infinite chain of cars, each of them being coupled to its two neighbors, where the value $u(n,t)$ is the displacement of car $n$ at time $t$ from its equilibrium position \cite{FB12}. This topic has been studied by the branch of mathematical analysis, using techniques from PDEs, functional and harmonic analysis, among others. We refer to \cite{CLRV, GKLW, GLM, KLW1, KLW2, KLW3, LiMu, Tasarov, Tasarov2, Tasarov3} and references therein.

	The study of semidiscrete models is not new. To the best of our knowledge, the pioneering work on this topic was presented by Bateman in \cite{Bateman} in 1943, where the author considered the Cauchy problem of first and second order involving a broad set of discrete operators. The author presented a list of $C_0$-semigroup and cosine operators associated (in the paper called "influence functions") using the method of generating functions, with application to a wide array of physical and engineering systems. The list presented shows us the crucial role played by the special functions, such as the Bessel type, Hermite's polynomial, among others. Later, many researchers focused their efforts on the study of properties of the difference operators. For instance, for diffusive and transport models, we refer to \cite{AGMP, Slavik1, Slavik2, Slavik3, Sthelik, Volek}.

	The analytical study of this class of equations and the representation of fundamental solution (also known as "Green functions") and its properties has received increasing interest from researchers in the last decade. Ciaurri et. al in \cite{CGRTV} obtain the maximum principle, weighted $\ell^p(\mathbb{Z})$-boundedness of conjugate harmonic functions, Riesz transforms and square functions of Littlewood-Paley for \eqref{Main1} considering $ B=(-\Delta)^\alpha, \alpha\in(0,1]$ and $\beta=1$. Under the same assumptions, Lizama and Roncal proved properties of the fundamental solution, such as spectrum, $\ell^1$-norm, almost periodicity, etc, using operator theory techniques with the properties of the Bessel functions to develop a theory of analytic semigroups and cosine operators generated by this operator. With regard to fractional semidiscrete systems, Keyantuo et. al. in \cite{KLW3} determined the explicit integral representation of the fundamental solution for \eqref{Main1} for $B=(-\Delta_d)^{\alpha}$ in the subdiffusive case (i.e, $0<\alpha,\beta\leq1$) through the subordination principles on Wright and L\'evy functions (see \cite{AM}). Furthermore, the authors proved the fact that the discrete fractional Laplacian generates a Markovian semigroup (see \cite[Theorem 3.8]{KLW3}). After, González-Camus et. al. in \cite{GKLW} got the representation of the explicit fundamental solution for \eqref{Main1} in the subdiffusive and superdiffusive fractional cases (i.e, $0<\alpha\leq1$, with  $0<\beta\leq1$ and $1<\beta\leq2$ respectively) in terms of the generalized Wright functions. It is worthwhile to mention that the discrete Fourier transform is the fundamental tool in order to get their respective representations and its properties.
	
	For the fractional backward/forward Euler operator, in \cite{ALT} the authors present a maximum and comparison principles for the discrete fractional derivatives, as well as regularity results when the space is a one dimensional mesh of length $h$. Rates of approximation results to the continuous fractional derivative in Marchaud sense are also accomplished.
	
	Quite recently, a general method for the study of the system \eqref{Main1} was presented by González-Camus et. al in \cite{GLM}, where the solution (unique) was obtained explicitly in terms of Mittag-Leffler on the Banach Algebra $\ell^1(\Z)$, as well as an integral representation and its properties such as $\ell^1(\mathbb{Z})$-norms, spectrum, among others, based on the subordination principle on Wright function and functional calculus presented in \cite{AM} and \cite{Ba2001}, respectively. The explicit representation through Mittag-Leffler function is given by
	\begin{align*}\label{solutionlinearcase}
		\displaystyle u(n,t)= (E_{\beta, 1}(t^\beta b)\ast\varphi)(n) +\int_{0}^{t}(t-s)^{\beta-1}\left(E_{\beta, \beta}((t-s)^\beta b)\ast f(\cdot,s)\right)(n)\ ds, \,\,  n\in \Z.
	\end{align*}
	Furthermore, the authors proved that if the initial data $\varphi\in \ell^p(\Z)$, for $1<p\leq\infty$ and $g$ is a suitable function, then this solution $u$ belongs to $\ell^p(\Z)$.

	Nonlinear models, and particularly nonlinear semidiscrete systems, have been the subject of research of many authors, due to their relevance in several areas as, physics, chemistry, biology, among others. For instance, the Fisher equation arises as a deterministic version of a stochastisc model for the spatial spreed of a favoured gene in a population, which is given by
	\begin{equation*}
		u_t(n,t)=\Delta_d u(n,t)+ ru(n,t)-u^2(n,t), \ n \in \Z, \ t \geq 0.
	\end{equation*}
	This equation was considered by Zinner, Harris and Hudson in \cite{Zinner}, where the authors focus their study on the existence and nonexistence of traveling wavefronts. A retarded version was studied by Hu and Li in \cite{Hu}
	\begin{equation*}
		u_t(n,t)=\Delta_d u(n,t)+ r(n-ct)u(n,t)-u^2(n,t), \ n \in \Z, \ t \geq 0, \ c>0,
	\end{equation*}
	where the authors proved that the long term behavior of solutions depends on the speed of the shifting habitat edge $c$. Another interesting example is provided by the \emph{discrete Nagumo equation}
	\begin{equation*}
		u_t(n,t)=\rho\Delta_d u(n,t)+ u(n,t)(u(n,t)-a)(1-u(n,t)), \ n \in \Z, \ t \geq 0,
	\end{equation*}
	where $\rho>0$ and $a \in (0,\frac{1}{2})$. The discrete Nagumo equation has been the focus of attention due to its application in models for conduction in myelanited axon. For more information we refer \cite{Bell, Keener, KLW3}.
	
	Due to the above examples of nonlinear models, Keyantuo et. al. in \cite{KLW3} studied the following model
	\begin{equation*}
		\begin{cases}  \mathbb D^\beta_t u(n,t)= -(\Delta_d)^{\alpha} u(n,t) + f(n-ct,u(n,t)), &n\in\mathbb{Z}, \;t>0,\\
			u(n,0)=\varphi(n),\; &n\in\mathbb{Z},\\
		\end{cases}
	\end{equation*}
	considering a suitable function $f$ and $0<\alpha\leq 1$, obtaining general results including all equations presented above. 
	
	We can notice that all the above nonlinear models are governed by the spatial operator discrete fractional Laplacian. Here arises a natural question: is it an intrinsic property of the discrete fractional Laplacian, or is there a deeper concept which is crucial in order to obtain this well-posedness? The main objective of this study is to answer the question above.
	Our purpose in this paper is to provide sufficient conditions for the well-posedness of the nonlinear model \eqref{Mainequation}, generalizing the results obtained in \cite{KLW3} for a branch of important difference operators and its fractional powers. To answer this question, we will use tools from Banach Algebra $\ell^1(\Z)$, resolvent families of operators theory and properties of Markov operators.

	This paper is organized as follows: In Section 2, we present the basic concepts of this work, focusing mainly on the Banach Algebra $\ell^1({\Z})$, properties of discrete convolution operators, Markovian semigroups and resolvent operators theory. In Section 3 we present the main result of this study in the Theorem \eqref{MainTheorem}. We give the sufficient conditions to ensure the well-posedness of the nonlinear model \eqref{Mainequation} and uniqueness of solution, through an iterative method and Gronwall inequality. Finally, we present two corollaries concerning the comparison principle for solutions according to respective initial values. In Section 4 we prove the Markovian property for $C_0$-semigroup generated by fractional powers of a discrete convolution operator which is an infinitesimal generator of a Markovian $C_0$-semigroup, concluding with applications of the main result. We finish this paper with the Appendix, including the definitions and properties about special functions used throughout this study.

	\section{Preliminaries}
	
	In this section, we give the basic concepts and results concerning to discrete convolution operator. Furthermore, we present the notation used throughout this work.
	
	\subsection{Operators defined by discrete convolution}
	
	We consider as framework of this study the commutative \emph{Banach Algebra with unit}
	\begin{equation*}
		\ell^1(\Z):=\{f:\Z \to \R:\sum_{n \in \Z}|f(n)|<\infty\},
	\end{equation*}
	equipped with the product the \emph{discrete convolution} defined by
	\begin{equation*}\label{convolutiondef}
		(f\ast g)(n):=\sum_{j\in\mathbb{Z}}^{} f(n-j)g(j), \qquad n\in \Z.
	\end{equation*}
	The unit for the product is given by \emph{$\delta$-Kronecker measure} concentrated at $0$, denoted by $\delta_0$. This element satifies: for all  $a\in\ell^1(\mathbb{Z}),\ a\ast \delta_0=a$. It is convenient recall that the $\delta$-Kronecker measure $\delta_n$ is defined by $$\delta_n(m):=\begin{cases}
		1, & \quad m=n,\\
		0, & \quad m\neq n.
	\end{cases}$$
	
	Furthermore, we remark that  $\ell^1(\Z)$ is semi-simple and regular Algebra (see \cite[Theorem 4.7.4]{La}) and \cite[Corolary 7.2.3]{La} respectively).
	
	Given $a=(a(n))_{n\in \Z}\in \ell^1(\Z)$,  we define the following discrete operator by convolution
	\begin{equation*}\label{convolutiondefinition}
		Af(n):= (a\ast f)(n), \ {n\in \Z}, \ f\in \ell^p(\Z).
	\end{equation*} 
	We say that $A$ is a \emph{discrete convolution operator}, with \emph{kernel} $a$. We note that $A\in{\mathcal B}(\ell^p(\Z)):=\{B:\ell^p(\Z)\to\ell^p(\Z):B\ \mbox{is linear and bounded}\}$ for all $1\le p\le \infty$, since from Young's inequality $$\|Af(n)\|_p=\|(a\ast f)(n)\|_p\leq \|a\|_1\|f\|_p .$$ Moreover, the operator $A$ satisfies 
	\begin{equation}\label{Norms}
		\Vert A\Vert_1=\Vert a\Vert_1.
	\end{equation}
	Here, we understand the powers for $a \in \ell^{1}(\mathbb{Z})$ through discrete convolution, given by
	\begin{equation*}
		a^j:=a^j(n):=(\underbrace{a\ast...\ast a}_{j-times})(n).
	\end{equation*}
	Obviously, we have $a^{0}=\delta_0$, $a^{1}= a$.
	
	In the Banach Algebra $\ell^1(\Z)$, given $\alpha, \beta >0,$ we define the vector-valued \emph{Mittag-Leffler function} $E_{\alpha,\beta}: \ell^1(\Z)\to \ell^1(\Z)$ as follows
	$$
	E_{\alpha, \beta}(a) := \sum_{j=0}^{\infty}\frac{a^{ j}}{\Gamma(\alpha j + \beta)}, \quad a\in\ell^1(\Z).
	$$
	We define $e^{at}$ \emph{the exponential of $a$} as follows
	\begin{equation*}\label{expo}
		e^{t a}:=\sum_{n=0}^\infty{t^na^{n}\over n!}, \ t\in \R_0^+.
	\end{equation*}
	Note that $\displaystyle E_{1,1}(ta)=\sum_{j=0}^\infty \frac{(ta)^{ j}}{j!}=e^{ta}.$
	
	We denote $\{e^{tA}\}_{t\geq 0}$ to the \emph{family semigroup of operators} generated by the operator $A\in \mathcal{B}(\ell^p(\Z))$. In the case of discrete convolution operators, $Af=a\ast f$, this family acts in fact by discrete convolution over $\ell^p(\Z)$. Moreover, we have the explicit representation
	\begin{equation*}\label{exponentionoperator}
		e^{tA}f(n)=(e^{at}\ast f)(n),\ n \in \mathbb{Z}, \ f \in \ell^p(\Z),
	\end{equation*}
	and, for each $t\geq 0$, from the equality \eqref{Norms} we have
	\begin{equation}\label{normsemigroupskernel}
		\|e^{tA}\|_1=\|e^{ta}\|_1.
	\end{equation} 
	See \cite{GLM} and \cite{La} for more information about the Banach Algebra $\ell^1(\Z)$ and results for discrete convolution operators. We refer \cite{Engel, Pazy} for more information about the semigroup theory.
	
	\subsection{Fractional Difference Operators}
	
	Next, we present three important operators.
	
	\subsubsection{The discrete Laplacian} Denoted by $\Delta_d$ given by
	\begin{equation*}\label{discretelaplacian}
		\Delta_d f(n):= f(n+1)-2f(n)+f(n-1)=([\delta_{-1}-2\delta_0+\delta_{1}]\ast f)(n).
	\end{equation*}
	For $\alpha \in (0,1)$ the \emph{discrete fractional Laplacian} is given by
	\begin{align*}\label{discretefractionallaplacian}
		(-\Delta_d)^\alpha f(n)=\sum_{k\in\mathbb{Z}}K_d^\alpha(n-k)f(k)=(K_d^\alpha\ast f)(n),\, n\in\mathbb{Z},
	\end{align*}
	where the coefficients $K_d^\alpha$ are given by
	\begin{equation*}\label{kernelfractionallaplacian}
		K_d^\alpha(n)=\frac{1}{2\pi}\int_{-\pi}^\pi (4\sin^2(\theta/2))^\alpha e^{-in\theta}d\theta=\frac{(-1)^n\Gamma(2\alpha+1)}{\Gamma(1+\alpha+n)\Gamma(1+\alpha-n)},\, n\in\mathbb{Z}.
	\end{equation*}
	We mention that this operator corresponds to fractional powers of the second-order central difference approximation
	for the second-order derivative. For more information about the representation and properties of the discrete fractional Laplacian, see \cite{GKLW}, \cite{GLM} and \cite{KLW3}.
	\subsubsection{The forward/backward Euler operators} Denoted by $-\Delta$ and $\nabla$ respectively, are given by
	\begin{equation*}\label{discreteright}
		-\Delta f(n):=f(n)-f(n+1)=([\delta_0-\delta_{-1}]\ast f)(n).
	\end{equation*}
	\begin{equation*}\label{discreteleft}
		\nabla f(n):=f(n)-f(n-1)=([\delta_0-\delta_{1}]\ast f)(n).
	\end{equation*}
	From the above definition, $\alpha \in (0,1)$ the \emph{fractional forward/backward Euler operator}, given by
	\begin{align*}\label{discretefractionalright/left}
		(\Delta/-\nabla)^\alpha f(n)=\sum_{k\in\mathbb{Z}}K_{\pm}^\alpha(n-k)f(k)=(K_{\pm}^\alpha\ast f)(n),\, n\in\mathbb{Z},
	\end{align*}
	where  the coefficients $K_{\pm}^\alpha$  are given by
	\begin{equation*}\label{kernelfractionalright}
		K_{\pm}^\alpha(n)=\frac{1}{2\pi}\int_{-\pi}^\pi (1-e^{\pm i\theta})^\alpha e^{-in\theta}d\theta=(-1)^n\frac{\Gamma(\alpha+1)}{\Gamma(\alpha \mp n+1)\Gamma(\pm n+1)}\chi_{\mathbb{Z}_{\pm}}(n),
	\end{equation*}
	where $n\in\mathbb{Z}$ and $\mathbb{Z}_{\pm}:=\{\ldots,\pm2,\pm1,0\}$. It is convenient remark that the kernels $K_{+}^\alpha$ and $K_{-}^\alpha$ correspond to $\Delta^\alpha$ and $(-\nabla)^\alpha$ respectively.
	We recall that the above operators are related to fractional powers of the Euler scheme of approximation. For more information about the representation and properties of the discrete fractional differences see \cite{ALT} and \cite{GLM}.
	\subsubsection{The discrete Laplacian 2-step} Denoted by $\Delta_{dd}$ given by
	\begin{equation*}\label{discretelaplacian}
		\Delta_{dd} f(n):= f(n+2)-2f(n)+f(n-2)=([\delta_{-2}-2\delta_0+\delta_{2}]\ast f)(n).
	\end{equation*}
	For $\alpha \in (0,1)$ we obtain the \emph{discrete fractional Laplacian 2-step}, given by
	\begin{align*}\label{discretefractionallaplacian2}
		(-\Delta_{dd})^{\alpha} f(n)=\sum_{k\in\mathbb{Z}}K_{dd}^{\alpha}(n-k)f(k)=(K_{dd}^{\alpha}\ast f)(n),\, n\in\mathbb{Z},
	\end{align*}
	where the coefficients $K_{dd}^{\alpha}$ are given by
	\begin{equation*}\label{kernelfractionallaplacian2}
		K_{dd}^{\alpha}(n)=\frac{1}{2\pi}\int_{-\pi}^\pi (4\sin^2(\theta))^\alpha e^{-in\theta}d\theta=\frac{\cos({n\over 2}\pi)\Gamma(2\alpha+1)}{\Gamma(1+\alpha+{n\over 2})\Gamma(1+\alpha-{n\over 2})}, 
	\end{equation*}
	with $n\in\mathbb{Z}$. This operator appears in De Juhasz equation, appearing in the seminal Bateman’s paper \cite{Bateman} in connection with surges in springs and connected systems of springs. For more information about the representation and properties of the discrete fractional Laplacian 2 step, see \cite{GLM}.
	
	The remarkable fact is, for $\alpha \in (0,1)$, $(-\Delta_d)^{\alpha}$, $(\Delta/-\nabla)^\alpha$ and $(-\Delta_{dd})^{\alpha}$ are linear bounded operators over $\ell^p(\mathbb{Z}), 0\leq p\leq\infty$, since for $|n|\to \infty$, $|K_d^{\alpha}(n)|\sim \frac{1}{n^{2\alpha+1}}$,  $|K_{\pm}^{\alpha}(n)|\sim \frac{1}{n^{\alpha+1}}$ and $|K_{dd}^{\alpha}(n)|\sim \frac{1}{n^{2\alpha+1}}$ respectively. See  \cite[Section 4]{GLM}, \cite[eq. 4.5, section 4.2]{Ortigueira} for more information.

	It is well-known that the semigroups generated by $\Delta_d$, $-\Delta/\nabla$ and $\Delta_{dd}$ are (see \cite{Bateman}, \cite{GLM})
	\begin{equation*}
		e^{t\Delta_d}f(n)=\sum_{m\in\mathbb{Z}}^{}e^{-2t}I_{n-m}(2t)f(m)=:(e^{t[\delta_{-1}-2\delta_0+\delta_{1}]}\ast f)(n).
	\end{equation*}
	\begin{equation*}
		e^{t(-\Delta/\nabla)}f(n)=\sum_{m\in\mathbb{Z}}^{}e^{-t}\frac{t^{\mp(n-m)}}{(\mp(n-m))!}\chi_{\Z_{\mp}}(n-m)f(m)=:(e^{t([\delta_0-\delta_{-1}]/[\delta_0-\delta_{1}])}\ast f)(n).
	\end{equation*}
	\begin{equation*}
		e^{t\Delta_{dd}}f(n)=\sum_{m\in\mathbb{Z}}^{}e^{-2t}I_{\frac{n-m}{2}}(2t)\chi_{2\mathbb{Z}}(n-m)f(m)=:(e^{t[\delta_{-2}-2\delta_0+\delta_{2}]}\ast f)(n).
	\end{equation*}
	\begin{remark}
		The fractional powers for the discrete operators $\Delta_d$, $(-\Delta/\nabla)$ and $\Delta_{dd}$ have been obtained using the \emph{Balakrishnan's Formula}. The work \cite{GLM} a general representation for fractional powers of discrete operators which are generators of uniformly bounded $C_0$-semigroup in the Banach Algebra $\ell^1(\mathbb{Z})$ framework is presented.
	\end{remark}

	\subsection{Markovian semigroups}
	
	The concept of Markovian semigroups is the key for the achievement of this work.
	\begin{definition}\label{Markovian}
		Let $(X,\Sigma,\mu)$ be a $\sigma$-finite measure space. We denote $M\subseteq \ell^1(X):=\ell^1(X,\Sigma,\mu)$ as the subset of all density functions over $X$
		\begin{equation*}
			M=\{f\in \ell^1(X):f\geq 0, \|f\|_1=1\}.
		\end{equation*}
		We call \emph{Markov Operator} to all linear mapping $F:\ell^1(X)\to\ell^1(X)$ satisfying $F(M)\subset M$.
	\end{definition}
	\begin{definition}
		A family $\{T(t)\}_{t\geq 0}$ of Markov operators is called \emph{Markovian semigroup} if furthermore it is a $C_0$-semigroup family of operators.
	\end{definition}
	\begin{remark}We note that if $A$ is a convolution operator with kernel $a\in\ell^1(\Z)$, with $e^{tA}$ a Markovian semigroup, then $e^{ta}$ satisfies $e^{ta}\geq 0$ and $\|e^{ta}\|_{1}=1$. Reciprocally, the assertion is true using \eqref{normsemigroupskernel}.
	\end{remark}
	\begin{remark}
		Let $\textbf{1}(n)$ the sequence constantly equals to $1$. Let $A$ be a convolution operator with kernel $a \in \ell^1(Z)$ be a generator of a Markovian semigroup $e^{tA}$. Then the following equality holds
		\begin{equation}\label{Markovianproperty}
			e^{tA}\textbf{1}(n)=\textbf{1}(n).
		\end{equation}
	\end{remark}
	\begin{proof} We remark that $\textbf{1}(n)=\textbf{1}(n-m)$, for all $n,m\in \Z$. Since $e^{tA}$ is a Markovian semigroup, we have
		\begin{align*}
			e^{tA}\textbf{1}(n)=(e^{ta}\ast \textbf{1})(n)=\sum_{m \in \Z}^{}e^{ta}(m)\textbf{1}(n-m)=\textbf{1}(n)\sum_{m \in \Z}^{}e^{ta}(m)=\textbf{1}(n).
		\end{align*}
	\end{proof}
	
	\subsection{Resolvent and Integral Resolvent families of Operators}\label{subordination}
	
	A powerful tool in order to study existence of solution of abstract evolution equations is the \emph{resolvent family of operators theory}, playing a crucial role in the study  of the well-posedness of fractional systems (see \cite{Ba2001, KLW1, Pruss}).
	
	\begin{definition} \label{def:2.6}
		Let $B$ be a closed linear operator with domain $D(B) \subset X$ and $a \in L^1_{loc}(\mathbb{R}^+_0).$ A family $\{S(t)\}_{t\geq 0}$ of bounded and linear operators in $X$ is called a \emph{resolvent} with generator $B$ if the following conditions are satisfied:
		\begin{enumerate}
			\item [(1)] $S(t)$ is strongly continuous on $\R_0^+$ and $S(0)=I$;
			\item [(2)] $S(t)$ commutes with $B$. That means that $S(t){D}(B)\subset {D}(B)$ and $BS(t)x=S(t)Bx$, for all $x \in {D}(B)$ and $t\geq 0$;
			\item [(3)] The resolvent equation holds:
			\begin{equation*}
				\label{S}
				S(t)x= x + \int_{0}^{t}a(t-s)BS(s)xds, \quad\mbox{for all}\ x \in {D}(B), \,\, t \geq 0.
			\end{equation*} 
		\end{enumerate}
	\end{definition}
	\begin{definition} \label{def:2.7}
		Let $B$ be a closed linear operator with domain $D(B) \subset X$ and $a \in C(\mathbb{R}^+_0).$ A strongly continuous family $\{P(t)\}_{t\geq 0}$ of bounded linear operators in $X$ is called an \emph{integral resolvent} with generator $B$ if the following conditions are satisfied:
		\begin{enumerate}
			\item [(1)] $P(0)= a(0)I;$
			\item [(2)] $P(t)$ commutes with $B;$
			\item [(3)] The integral resolvent equation holds:
			\begin{equation*}
				\label{P1}
				P(t)x= a(t)x + \int_{0}^{t}a(t-s)BP(s)xds, \quad\mbox{for all}\ x \in {D}(B), t \geq 0.
			\end{equation*} 
		\end{enumerate}
	\end{definition}
	
	\begin{remark}
		It is well known the fact for the case $a(t)\equiv 1$ the resolvent family corresponds to the $C_0-$semigroup generated by $A$ (see \cite{Pruss}).
	\end{remark}

	\section{Main Results}

	In this section we present the main purpose of this paper, which is to establish the well-posedness of the nonlinear model \eqref{Mainequation}. We recall the equation \eqref{Mainequation} is given by
	
	\begin{equation*}
		\begin{cases}  \mathbb D^\beta_t u(n,t)= B u(n,t) + f(n-ct,u(n,t)), &n\in\mathbb{Z}, \;t>0,\\
			u(n,0)=\varphi(n),\; &n\in\mathbb{Z},\\
		\end{cases}
	\end{equation*}
	where $\mathbb{D}_t^{\beta}$ is the fractional Caputo derivative, $\beta \in (0,1]$, $c>0$ some positive constant, the discrete convolution operator $Bh(n)=(b \ast h)(n)$, with $b$ belonging to Banach Algebra $\ell^{1}(\mathbb{Z})$ and $f,\varphi$ functions with suitable hypotheses.

	In order to prove the main result of this work, for each $\gamma>0$, we define the set
	\begin{equation*}
		\mathcal{L}_{\gamma}:=\{u\in \ell^{\infty}(\Z):0\leq u(n)\leq\gamma, \forall n \in \Z\}.
	\end{equation*}
	
	Let $\rho>0$. We define the function: $F:\mathbb{R}\times[0,\gamma]\to \mathbb{R}$, given by
	\begin{equation}\label{Definition F}
		F(x,s):=\rho s+ f(x,s).
	\end{equation}
	This function satisfies properties presented in the following Lemma. The proof can be found in \cite[Lemma 4.1]{KLW3}.
	\begin{lemma}\label{lemmaF}
		Let $\gamma>0$. Suppose that $f:\R\times\R_0^+$ is measurable, the restriction of $f(x,\cdot)$ to $[0,\gamma]$ belongs to $C^1[0,\gamma]$, is concave in $[0,\gamma]$, and satisfies $f(x,0)=0$, for all $x \in \R$. Let $\rho>0$ such that
		\begin{equation}\label{rhoinequality}
			\rho+\partial_sf(x,\gamma)\geq 0, \ \forall x \in \R.
		\end{equation}
		Then, the function $F$ defined in \eqref{Definition F} is no decreasing in the second variable and no negative for $x \in \R$.
	\end{lemma}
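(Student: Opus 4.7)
The plan is to establish the two assertions in sequence: first the monotonicity of $F(x,\cdot)$, and then the non-negativity as an immediate consequence. The entire argument rests on exploiting the concavity of $f(x,\cdot)$ on $[0,\gamma]$ together with the endpoint condition encoded in \eqref{rhoinequality}.

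First, I would differentiate $F$ with respect to its second variable, which is legitimate since $f(x,\cdot)\in C^{1}[0,\gamma]$, to obtain $\partial_s F(x,s)=\rho+\partial_s f(x,s)$ for every $s\in[0,\gamma]$ and $x\in\R$. The concavity of $f(x,\cdot)$ on $[0,\gamma]$ implies that the map $s\mapsto \partial_s f(x,s)$ is non-increasing, so for every $s\in[0,\gamma]$ we have
$$\partial_s f(x,s)\;\geq\; \partial_s f(x,\gamma).$$
Adding $\rho$ to both sides and invoking the hypothesis \eqref{rhoinequality}, this yields $\partial_s F(x,s)\geq \rho+\partial_s f(x,\gamma)\geq 0$ uniformly in $x$, which establishes that $F(x,\cdot)$ is non-decreasing on $[0,\gamma]$.

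The non-negativity then follows by evaluating at the left endpoint: since $F(x,0)=\rho\cdot 0+f(x,0)=0$ by hypothesis, the monotonicity just proved forces $F(x,s)\geq F(x,0)=0$ for every $s\in[0,\gamma]$ and every $x\in\R$.

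No substantial obstacle is anticipated; each hypothesis plays its role transparently, namely the $C^{1}$ regularity permits pointwise differentiation in $s$, concavity converts into the monotonicity of $\partial_s f(x,\cdot)$ required to reduce the sign check to the single point $s=\gamma$, condition \eqref{rhoinequality} provides that sign, and $f(x,0)=0$ anchors the value used to propagate non-negativity from monotonicity. The only mild care to take is to emphasise that all the estimates are uniform in $x\in\R$, so that the conclusion holds pointwise in the spatial variable, which is what will be used later when plugging $F$ into the iterative scheme for \eqref{Mainequation}.
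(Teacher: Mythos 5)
Your proof is correct and is the natural argument: concavity gives monotonicity of $\partial_s f(x,\cdot)$, hypothesis \eqref{rhoinequality} then yields $\partial_s F(x,s)\geq 0$ on $[0,\gamma]$, and $F(x,0)=0$ propagates non-negativity. The paper itself does not reproduce a proof but defers to \cite[Lemma 4.1]{KLW3}, and your argument is exactly the standard one used there, so there is nothing to add.
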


	We get the expression of the solution for the equation \eqref{Mainequation} employing the resolvent operator theory and the subordination principle on Wright function \eqref{Wright} presented in \cite{Ba2001, KLW1}. Let us define the following operators
	\begin{equation}\label{Resolvent}
		S_{\beta}(t):=\int_{0}^{t}\Phi_{\beta}(\tau)e^{\tau t^{\beta}B}d\tau, \ t\geq 0,
	\end{equation}
	called the \emph{resolvent family} for \eqref{Mainequation}, and
	\begin{equation}\label{IntegralResolvent}
		P_{\beta}(t):=\int_{0}^{t}\beta\tau\Phi_{\beta}(\tau)e^{\tau t^{\beta}B}d\tau, \ t\geq 0,
	\end{equation}
	called the \emph{integral resolvent family} for \eqref{Mainequation}. Then, the mild solution $u(n,t)$ for the nonlinear model \eqref{Mainequation} satisfies the equation
	\begin{equation}\label{Auxequation}
		u(n,t)=S_{\beta}(t)\varphi(n)+\int_{0}^{t}P_{\beta}(t-s)f(n-cs,u(n,s))\ ds.
	\end{equation}
	
	We consider the perturbed operator $B_{\rho}:=B-\rho I$. This operator is bounded over $\ell^p(\Z)$. Let us denote $\{e^{tB_{\rho}}\}_{t\geq0}$ the semigroup generated by $B_{\rho}$. From general theory of $C_0$-semigroup family of operators, we have $e^{tB_{\rho}}=e^{t(B-\rho I)}=e^{tB}e^{-\rho t}$. A change of variable of \eqref{Resolvent} and \eqref{IntegralResolvent} gives us an equivalent representation, given by
	\begin{equation}\label{Resolventchange}
		S_{\beta}(t):=\int_{0}^{t}t^{-\beta}\Phi_{\beta}(\tau t^{-\beta})e^{\tau B_{\rho}}d\tau, \ t\geq 0,
	\end{equation}
	and
	\begin{equation}\label{IntegralResolventchange}
		P_{\beta}(t):=\int_{0}^{t}\beta\frac{\tau}{t^{1+\beta}}\Phi_{\beta}(\tau t^{-\beta})e^{\tau B_{\rho}}d\tau, \ t\geq 0.
	\end{equation}
	
	\begin{proposition} Let $S_{\beta}(t)$ and $P_{\beta}$ be the resolvent and integral resolvent respectively generated by the operator $B$. Let $S_{\beta}^{\rho}(t)$ and $P_{\beta}^{\rho}(t)$ be the resolvent and integral resolvent respectively generated by the operator $-\rho I$. Then, we have the inequalities 
		\begin{equation}\label{cotaresolventeB}
			S_{\beta}(t)\varphi(n)\leq S_{\beta}^{\rho}(t)\varphi(n)
		\end{equation}
		and 
		\begin{equation}\label{cotaintegralresolvente}
			P_{\beta}(t)\varphi(n)\leq P_{\beta}^{\rho}(t)\varphi(n).
		\end{equation}
	\end{proposition}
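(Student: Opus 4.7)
The plan is to reduce both inequalities to a single pointwise comparison of the semigroup factors inside the subordination integrals. First I would put $S_\beta^\rho$ and $P_\beta^\rho$ in the same subordination form as $S_\beta$ and $P_\beta$. Since $S_\beta^\rho$ is generated by the scalar operator $-\rho I$, one has $S_\beta^\rho(t)\varphi(n) = E_\beta(-\rho t^\beta)\varphi(n)$, and the substitution $s = \tau t^\beta$ applied to the identity $E_\beta(-\rho t^\beta) = \int_0^\infty \Phi_\beta(\tau)e^{-\rho\tau t^\beta}\,d\tau$ yields
\[
S_\beta^\rho(t)\varphi(n) = \int_0^\infty t^{-\beta}\Phi_\beta(\tau t^{-\beta})\,e^{-\rho\tau}\,\varphi(n)\,d\tau,\qquad P_\beta^\rho(t)\varphi(n) = \int_0^\infty \beta\tau t^{-(1+\beta)}\Phi_\beta(\tau t^{-\beta})\,e^{-\rho\tau}\,\varphi(n)\,d\tau,
\]
i.e.\ the same kernels appearing in \eqref{Resolventchange}--\eqref{IntegralResolventchange} but with the operator factor $e^{\tau B_\rho}$ replaced by the scalar $e^{-\rho\tau}$.

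Subtracting term by term, both claims reduce to the single pointwise inequality
\[
(e^{\tau B_\rho}\varphi)(n) \le e^{-\rho\tau}\varphi(n),\qquad \tau\ge 0,\ n\in\Z,
\]
which is then integrated against the nonnegative Wright kernels. Factoring $e^{\tau B_\rho} = e^{-\rho\tau}e^{\tau B}$, this amounts to $(e^{\tau B}\varphi)(n)\le \varphi(n)$, and here the Markovian hypothesis on $B$ is decisive: by \eqref{Markovianproperty} the kernel $e^{\tau b}$ is a nonnegative element of $\ell^1(\Z)$ with $\|e^{\tau b}\|_1 = 1$, so $e^{\tau B}$ acts as convolution against a probability measure on $\Z$ and is therefore an $\ell^\infty$-contraction that fixes the constant sequence $\textbf{1}$.

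Finally, since $\Phi_\beta \ge 0$, multiplying the pointwise bound by the nonnegative kernels $t^{-\beta}\Phi_\beta(\tau t^{-\beta})$ and $\beta\tau t^{-(1+\beta)}\Phi_\beta(\tau t^{-\beta})$ and integrating over $\tau\in(0,\infty)$ preserves the inequality and yields \eqref{cotaresolventeB} and \eqref{cotaintegralresolvente}. The main obstacle is the pointwise semigroup bound $(e^{\tau B}\varphi)(n)\le \varphi(n)$: it is immediate when $\varphi$ is constant (it is then an equality, by \eqref{Markovianproperty}) but is not automatic for general $\varphi\in\ell^\infty(\Z)$, so one must exploit the specific structure of the $\varphi\in\mathcal{L}_\gamma$ that arises in the iteration of Section~3 — typically by dominating $\varphi$ by its constant majorant $\|\varphi\|_\infty\textbf{1}$ and invoking positivity of $e^{\tau B}$ to transfer the scalar bound to $\varphi(n)$.
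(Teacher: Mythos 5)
Your reduction is essentially the paper's own argument: both pass to the subordination representations \eqref{Resolventchange}--\eqref{IntegralResolventchange}, factor $e^{\tau B_{\rho}}=e^{-\rho\tau}e^{\tau B}$, and reduce both inequalities to the single pointwise bound $(e^{\tau B}\varphi)(n)\le\varphi(n)$, integrated against the nonnegative Wright kernels. The difference is that the paper simply asserts this bound as a consequence of the Markovian hypothesis, whereas you correctly flag it as the real obstacle: a Markovian convolution semigroup is an $\ell^{\infty}$-contraction fixing $\textbf{1}$, but it is \emph{not} pointwise dominated by the identity. Indeed, for $\varphi=\gamma\delta_{0}\in\mathcal{L}_{\gamma}$ and $B=\Delta_d$ one has $S_{\beta}(t)\varphi(n)>0=S_{\beta}^{\rho}(t)\varphi(n)$ for $n\neq 0$ and $t>0$, so the inequalities \eqref{cotaresolventeB}--\eqref{cotaintegralresolvente} as literally stated fail for non-constant $\varphi$. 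On this point your diagnosis is sharper than the proof you were asked to reproduce.

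However, your proposed repair does not close the gap for the statement as written: dominating $\varphi$ by $\|\varphi\|_{\infty}\textbf{1}$ and invoking positivity gives $(e^{\tau B}\varphi)(n)\le\|\varphi\|_{\infty}$, hence $S_{\beta}(t)\varphi(n)\le E_{\beta,1}(-\rho t^{\beta})\,\|\varphi\|_{\infty}$ and the analogous bound for $P_{\beta}$ --- a comparison with the resolvent of $-\rho I$ applied to the \emph{constant} majorant, not to $\varphi$ itself. That weaker, constant-majorant form is in fact exactly what is invoked in the proof of Theorem \ref{MainTheorem}, where the right-hand side is evaluated at the constant $\gamma$; so your version is the correct statement to aim for, but as a proof of the Proposition as displayed it, like the paper's, only establishes the variant with $\|\varphi\|_{\infty}\textbf{1}$ (or $\gamma\textbf{1}$) in place of $\varphi$ on the right-hand sides. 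The Proposition should be restated accordingly, after which your argument is complete.
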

	
	\begin{proof}
		We use the representation \eqref{Resolventchange} \eqref{IntegralResolventchange} and the fact that $B$ is the infinitesimal generator of a Markovian semigroup $e^{tB}$
		\begin{align*}
			S_{\beta}(t)\varphi(n)&=\int_{0}^{t}t^{-\beta}\Phi_{\beta}(\tau t^{-\beta})e^{\tau B_{\rho}}\varphi(n)d\tau\\
			&=\int_{0}^{t}t^{-\beta}\Phi_{\beta}(\tau t^{-\beta})e^{\tau B}e^{-t\rho}\varphi(n)d\tau\\
			&\leq \int_{0}^{t}t^{-\beta}\Phi_{\beta}(\tau t^{-\beta})e^{-t\rho}\varphi(n)d\tau\\
			& =S_{\beta}^{\rho}(t)\varphi(n),
		\end{align*} 
		Similarly for the integral resolvent, we consider any $\varphi\in \mathcal{L}_{\gamma}$, we get
		\begin{align*}
			P_{\beta}(t)\varphi(n)&=\int_{0}^{t}\beta\frac{\tau}{t^{1+\beta}}\Phi_{\beta}(\tau t^{-\beta})e^{\tau B_{\rho}}\varphi(n)d\tau\\
			&=\int_{0}^{t}\beta\frac{\tau}{t^{1+\beta}}\Phi_{\beta}(\tau t^{-\beta})e^{\tau B}e^{-t\rho}\varphi(n)d\tau\\
			&\leq \int_{0}^{t}\beta\frac{\tau}{t^{1+\beta}}\Phi_{\beta}(\tau t^{-\beta})e^{-t\rho}\varphi(n)d\tau\\
			& =P_{\beta}^{\rho}(t)\varphi(n).
		\end{align*}
		
	\end{proof}

	Next, we present the main result of this paper.
	\begin{theorem}\label{MainTheorem}
		Let $\gamma>0$. Suppose the hypotheses of Lemma \ref{lemmaF} holds and $f(x,\gamma)\leq 0$, for all $x \in \R$. Furthermore, suppose that $\partial_sf(x,0)$ is a non-increasing function of $x$. Let the initial value $\varphi \in \mathcal{L}_{\gamma}$. Then, there exists a unique mild solution $u \in C(\R_0^+,\mathcal{L}_{\gamma})$ to the equation \eqref{Auxequation} and hence a unique solution for nonlinear system \eqref{Mainequation}.
	\end{theorem}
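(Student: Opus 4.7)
The plan is to recast \eqref{Auxequation} in a form adapted to the shift $B \mapsto B_\rho = B - \rho I$: writing $F(x,s) = \rho s + f(x,s)$, the mild equation becomes
\begin{equation*}
u(n,t) = \widetilde S_\beta(t)\varphi(n) + \int_0^t \widetilde P_\beta(t-s)\, F(n-cs, u(n,s))\, ds,
\end{equation*}
where $\widetilde S_\beta,\widetilde P_\beta$ are the (integral) resolvents generated by $B_\rho$. By Lemma \ref{lemmaF}, $F$ is nondecreasing in its second argument and nonnegative, which is precisely what is needed for a monotone Picard scheme. Since $e^{tB}$ is Markovian, $e^{tB_\rho}=e^{-\rho t}e^{tB}$ is positivity-preserving, so $\widetilde S_\beta$ and $\widetilde P_\beta$ are positive operators, and the subordination formulas \eqref{Resolvent}--\eqref{IntegralResolvent} combined with \eqref{Markovianproperty} yield the scalar identities $\widetilde S_\beta(t)\mathbf{1} = E_\beta(-\rho t^\beta)\mathbf{1}$ and $\int_0^t \widetilde P_\beta(s)\mathbf{1}\,ds = \rho^{-1}\bigl(1 - E_\beta(-\rho t^\beta)\bigr)\mathbf{1}$.

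With this machinery I would set $u_0\equiv 0$ and iterate
\begin{equation*}
u_{k+1}(n,t) = \widetilde S_\beta(t)\varphi(n) + \int_0^t \widetilde P_\beta(t-s)\, F(n-cs, u_k(n,s))\, ds.
\end{equation*}
Because $F(x,0)=0$, positivity of $\widetilde S_\beta,\widetilde P_\beta$ and monotonicity of $F(x,\cdot)$ give by induction $0\le u_k\le u_{k+1}$. For the upper bound $u_k\le\gamma$, the hypothesis $f(x,\gamma)\le 0$ forces $F(x,s)\le F(x,\gamma)\le\rho\gamma$ on $[0,\gamma]$, and the scalar identities above produce
\begin{equation*}
u_{k+1}(n,t) \le \gamma E_\beta(-\rho t^\beta) + \rho\gamma\cdot\rho^{-1}\bigl(1 - E_\beta(-\rho t^\beta)\bigr) = \gamma.
\end{equation*}
Hence $\{u_k\}\subset\mathcal L_\gamma$ is pointwise nondecreasing and uniformly bounded by $\gamma$, so it converges pointwise to some $u\in\mathcal L_\gamma$, and dominated convergence inside the integral (with constant majorant $\rho\gamma$) shows that $u$ solves \eqref{Auxequation}. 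Continuity of $t\mapsto u(\cdot,t)$ in the $\ell^\infty$--pointwise sense follows from strong continuity of $\widetilde S_\beta$ and absolute continuity of the convolution integral.

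For uniqueness, given two solutions $u,v \in C(\R_0^+,\mathcal L_\gamma)$, concavity of $f(x,\cdot)$ on $[0,\gamma]$ and the hypothesis that $\partial_s f(x,0)$ is nonincreasing in $x$ together bound $\partial_s f$ uniformly on $\R\times[0,\gamma]$, giving a Lipschitz estimate $|f(x,s)-f(x,r)|\le L|s-r|$ independent of $x$. Combining this with the norm control $\|\widetilde P_\beta(t-s)\|\le C(t-s)^{\beta-1}$ coming from the subordination representation produces a weakly singular integral inequality
\begin{equation*}
\|u(\cdot,t)-v(\cdot,t)\|_\infty \le CL\int_0^t (t-s)^{\beta-1}\|u(\cdot,s)-v(\cdot,s)\|_\infty\, ds,
\end{equation*}
from which the generalized (fractional) Gronwall inequality forces $u\equiv v$. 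I expect the principal technical hurdle to be the derivation of the exact scalar identities $\widetilde S_\beta(t)\mathbf 1 = E_\beta(-\rho t^\beta)\mathbf 1$ and its integral counterpart used to close the iteration in $\mathcal L_\gamma$: this is precisely where the Markovian assumption is indispensable, since without $e^{tB}\mathbf 1=\mathbf 1$ the upper bound $u_k\le\gamma$ would fail and the scheme would leave $\mathcal L_\gamma$.
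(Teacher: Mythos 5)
Your proposal is correct and rests on the same core machinery as the paper: the shift $F(x,s)=\rho s+f(x,s)$ with resolvents attached to $B_\rho=B-\rho I$, positivity of $S_\beta,P_\beta$ inherited from the Markovian semigroup through the Wright subordination, a monotone iteration, and a Gronwall argument for uniqueness. The difference is organizational. The paper runs a two-sided scheme --- an increasing sequence $v_k$ started from $0$ and a decreasing sequence $w_k$ started from $\gamma$ --- proves $v_k\le w_j$ for all indices, and then identifies the two limits; you run only the increasing iteration and obtain the a priori bound $u_k\le\gamma$ directly from the scalar identities $\widetilde S_\beta(t)\mathbf{1}=E_{\beta,1}(-\rho t^\beta)\mathbf{1}$ and $\int_0^t\widetilde P_\beta(s)\mathbf{1}\,ds=\rho^{-1}\bigl(1-E_{\beta,1}(-\rho t^\beta)\bigr)\mathbf{1}$, which are exactly the paper's comparison with the scalar resolvents $S_\beta^\rho,P_\beta^\rho$ generated by $-\rho I$ combined with $\tfrac{d}{dt}S_\beta^\rho(t)=-\rho P_\beta^\rho(t)$. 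Your version is arguably cleaner: it produces the minimal fixed point and then proves uniqueness among \emph{all} solutions in $C(\R_0^+,\mathcal{L}_{\gamma})$ via a Lipschitz--Gronwall estimate, whereas the paper's Gronwall step as written only shows that the limits of its two particular sequences coincide. One caveat, shared with the paper: your claim that concavity plus monotonicity of $\partial_sf(\cdot,0)$ bounds $\partial_sf$ uniformly on $\R\times[0,\gamma]$ is not quite automatic --- concavity and \eqref{rhoinequality} give the lower bound $\partial_sf(x,s)\ge\partial_sf(x,\gamma)\ge-\rho$, but the upper bound needs $\sup_{x\in\R}\partial_sf(x,0)<\infty$, which a non-increasing function of $x$ need not satisfy as $x\to-\infty$. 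The paper's own uniqueness step has the same (unacknowledged) gap, and it is harmless in the examples of Section 4, where $\partial_sf(x,0)=r(x)$ is bounded.
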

	\begin{proof}
		We used a iterative method. We define the following operator
		\begin{equation}\label{fixedpointoperator}
			\mathcal{K}_{\beta}u(n,t):= S_{\beta}(t)\varphi(n)+\int_{0}^{t}P_{\beta}(t-s)F(n-cs,u(n,s))\ ds,
		\end{equation}
		where $F(x,y)=\rho y+ f(x,y)$ and inequality \eqref{rhoinequality} holds. Note that $F(x,0)=f(x,0)=0$. Furthermore, its follows from Dominated convergence Lebesgue Theorem that the operator $\mathcal{K}_{\beta}$ is continuous over $C(\R^+,\mathcal{L}_{\gamma})$.
		
		We construct the sequence 
		\begin{equation*}\label{lower}
			v_{k}(n,t)=\begin{cases}
				\mathcal{K}_{\beta}v_{k-1}(n,t),& \ \mbox{if}\ k\geq 1,\\
				0,& \ \mbox{if}\ k=0.
			\end{cases}
		\end{equation*}
		
		We computing $v_1(n,t)$, obtaining
		\begin{equation*}
			v_1(n,t)= S_{\beta}(t)\varphi(n)+\int_{0}^{t}P_{\beta}(t-s)F(n-cs,0)\ ds=  S_{\beta}(t)\varphi(n)\geq 0,
		\end{equation*}
		since $S_{\beta}(t)$ is a positive operator. Using the fact that $F(x,y)$ is no negative for all $x \in \R$ and $0\leq y\leq\gamma$, we get
		\begin{align*}
			v_1(n,t)&= S_{\beta}(t)\varphi(n)\\
			& \leq S_{\beta}(t)\varphi(n)+\int_{0}^{t}P_{\beta}(t-s)F(n-cs,v_1(n,t))\ ds\\
			&=v_2(n,t).
		\end{align*} 
		We remark that $F(x,y)$ is no decreasing in the second variable by Lemma \eqref{lemmaF}. Let $k \in \N$. We shall prove that $v_k(n,t)\leq v_{k+1}(n,t)$. Indeed, we have
		\begin{align*}
			v_k(n,t)&= S_{\beta}(t)\varphi(n)+\int_{0}^{t}P_{\beta}(t-s)F(n-cs,v_k(n,t))\ ds\\
			& \leq S_{\beta}(t)\varphi(n)+\int_{0}^{t}P_{\beta}(t-s)F(n-cs,v_{k+1}(n,t))\ ds\\
			&=v_{k+1}(n,t).
		\end{align*}
		Therefore, we get the inequalities $v_0\leq v_1 \leq \ldots \leq v_k\leq v_{k+1}\leq \ldots$.
		
		On the other hand, we consider the sequence
		\begin{equation*}\label{upper}
			w_{k}(n,t)=\begin{cases}
				\mathcal{K}_{\beta}w_{k-1}(n,t),& \ \mbox{if}\ k\geq 1,\\
				\gamma,& \ \mbox{if}\ k=0.
			\end{cases}
		\end{equation*}
		Then, using  that $F(x,y)$ is no decreasing in the second variable, we get
		\begin{align*}
			w_1(n,t)&= S_{\beta}(t)\varphi(n)+\int_{0}^{t}P_{\beta}(t-s)F(n-cs,w_0(n,t))\ ds\\
			& = S_{\beta}(t)\varphi(n)+\int_{0}^{t}P_{\beta}(t-s)F(n-cs,\gamma)\ ds.\\
		\end{align*}
		Note the fact that $0\leq F(x,y) \leq \rho\gamma$ since $f(x,\gamma)\leq 0$. Using this fact and the inequalities \eqref{cotaresolventeB} and \eqref{cotaintegralresolvente} we obtain
		\begin{align*}
			w_1(n,t)&\leq S_{\beta}^{\rho}(t)\varphi(n)+\int_{0}^{t}P_{\beta}^{\rho}(t-s)F(n-cs,\gamma)\ ds\\
			& \leq S_{\beta}^{\rho}(t)\varphi(n)+\rho\gamma\int_{0}^{t}P_{\beta}^{\rho}(t-s)\ ds.
		\end{align*}
		We recall that $0\leq \varphi(n)\leq \gamma$ and the fact that $S_{\beta}^{\rho}(0)=1$. Furthermore, we recall that
		\begin{equation*}\label{relationresolventeintresolvente}
			\frac{d}{dt}S_{\beta}^{\rho}(t)=-\rho P_{\beta}^{\rho}(t).
		\end{equation*}
		Therefore, integrating the above equality and computing, we have
		\begin{align*}\label{inequalityw1}
			w_1(n,t)& \leq S_{\beta}^{\rho}(t)\varphi(n)+\rho\gamma\int_{0}^{t}P_{\beta}^{\rho}(t-s)\ ds\\
			&  \leq S_{\beta}^{\rho}(t)\gamma - S_{\beta}^{\rho}(t)\gamma+\gamma\\
			&=\gamma.
		\end{align*}
		We use above inequality and Lemma \eqref{lemmaF} for to obtain $$F(n-cs,w_1(n,t))\leq F(n-cs,\gamma).$$ Hence, we obtain
		\begin{align*}
			w_1(n,t)& =S_{\beta}(t)\varphi(n)+\int_{0}^{t}P_{\beta}(t-s)F(n-cs,\gamma)\ ds\\
			&\leq  S_{\beta}(t)\varphi(n)+\int_{0}^{t}P_{\beta}(t-s)F(n-cs,w_1)\ ds \\
			&=w_2(n,t).
		\end{align*}
		Let $k \in \N$. We shall prove that $w_{k+1}(n,t)\leq w_{k}(n,t)$. Indeed, we have
		\begin{align*}
			w_{k+1}(n,t)& =S_{\beta}(t)\varphi(n)+\int_{0}^{t}P_{\beta}(t-s)F(n-cs,w_k(n,t))\ ds\\
			&\leq  S_{\beta}(t)\varphi(n)+\int_{0}^{t}P_{\beta}(t-s)F(n-cs,w_{k-1})\ ds \\
			&=w_k(n,t).
		\end{align*}
		Therefore, we get
		\begin{equation*}
			\gamma\geq w_1(n,t)\geq w_2(n,t)\geq\ldots\geq w_k(n,t)\geq w_{k+1}(n,t)\geq \ldots.
		\end{equation*}
		\underline{Assertion}: For any $j,k \in \N_0$, we have
		\begin{equation*}
			0\leq v_1(n,t)\leq\ldots\leq v_k(n,t)\leq \ldots\leq w_j(n,t)\leq\ldots \leq\w_1(n,t)\leq\gamma.
		\end{equation*}
		We suppose that there exists $j,k$ such that $w_j(n,t)\leq v_k(n,t)$. Then, we can find the first entire number $m \in \N$ such that $w_m(n,t)> v_m(n,t)$. Using lemma \eqref{lemmaF} we get
		\begin{align*}
			0<(v_m-w_m)&=\int_{0}^{t}P_{\beta}(t-s)[F(n-cs,v_{m-1})-F(n-cs,w_{m-1})]\leq 0,
		\end{align*}
		which is a contradiction.
		
		Therefore, it is clear that $\{v_k\}_{k\in\N}$ and $\{w_k\}_{k\in\N}$ are convergent sequences. Let us define $\displaystyle v:=\lim_{k\to \infty}v_k$ and $\displaystyle w:=\lim_{k\to \infty}w_k$. It is clear from assertion that $v(n,t)\leq w(n,t)$. Moreover, we have that $v,w\in C(\R^+,\mathcal{L}_{\gamma})$. Since the operator $K_{\beta}$ is continuous over $C(\R^+,\mathcal{L}_{\gamma})$, we have that $v$ and $w$ are solution for \eqref{Auxequation}.
		
		We shall prove uniqueness of solution. Indeed, the application the Mean Value Theorem yields
		\begin{align*}
			0\leq(w-v)(n,t)&=\int_{0}^{t}P_{\beta}(t-s)[F(n-cs,w(n,t))-F(n-cs,v(n,t))]\\
			&=\int_{0}^{t}P_{\beta}(t-s)\partial_s F(n-cs,\theta)[(w-v)(n,t)],
		\end{align*}
		where $v(n,t)\leq\theta\leq w(n,t)$. Since $\partial_s F(n-cs,\theta)=\rho+\partial_s f(n-cs,\theta)$, the Lemma \eqref{lemmaF} yields $\partial_s f(n-cs,\theta)$ is non-increasing and hence $\partial_s f(n-cs,\gamma)\leq\partial_s f(n-cs,\theta)\leq\partial_s f(n-cs,0)$. Consequently, we have $\leq\partial_s F(n-cs,\theta)\leq\partial_s F(n-cs,0)$.
		
		Observe that, for $y$ fixed, $\partial_sF(\cdot,y)$ is no decreasing. Then, from above inequality, we obtain
		\begin{align*}
			0\leq(w-v)(n,t)&=\int_{0}^{t}P_{\beta}(t-s)[F(n-cs,w(n,t))-F(n-cs,v(n,t))]\\
			&=\partial_s F(n,0)\int_{0}^{t}P_{\beta}(t-s)[(w-v)(n,t)].
		\end{align*}
		Therefore, we get
		\begin{align*}
			0&\leq \sup_{n\in\Z_+}[ w(n,t)-v(n,t)]\\
			& \leq \partial_s F(n,0)\int_{0}^{t}P_{\beta}(t-s) \sup_{n\in\Z_+}[(w-v)(n,t)]\\
			& \leq \partial_s F(n,0)\int_{0}^{t}P_{\beta}^{\rho}(t-s) \sup_{n\in\Z_+}[(w-v)(n,t)],
		\end{align*}
		where we used that \eqref{cotaintegralresolvente}. Finally, from Gronwall inequality (see e.g \cite{Ye}) for integral inequalities, we obtain $v\equiv w$, getting the desired result. 
	\end{proof}
	
	Next, we present two consequences of the Theorem \eqref{MainTheorem} concerning to a comparison principle between solution with respective initial values.
	
	\begin{corollary}\label{Corollarycomparison1}
		Assume the hypotheses of Theorem \eqref{MainTheorem}. Let $u_1(n,t)$ and $u_2(n,t)$ the solutions for equation \eqref{Auxequation} with initial condition $\varphi$ and $\psi$ respectively. If $\varphi(n)\leq\psi(n)$, for all $n \in \Z$, then $u_1(n,t)\leq u_2(n,t)$, for all $n \in \Z$ and $t \geq 0$.
	\end{corollary}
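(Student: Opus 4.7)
The idea is to revisit the monotone iteration used in the proof of Theorem \ref{MainTheorem} with both initial data $\varphi$ and $\psi$ in parallel. For $i=1,2$, consider the lower Picard sequences
\[
v_0^{(i)}\equiv 0,\qquad v_k^{(i)}(n,t) = S_{\beta}(t)\varphi_i(n) + \int_0^t P_{\beta}(t-s)\, F(n-cs, v_{k-1}^{(i)}(n,s))\,ds,
\]
with $\varphi_1=\varphi$ and $\varphi_2=\psi$. Theorem \ref{MainTheorem} shows that the sequences converge monotonically to the unique mild solutions $u_i$ as $k\to\infty$, so it suffices to prove $v_k^{(1)}(n,t)\leq v_k^{(2)}(n,t)$ for every $k\geq 0$ and then take the pointwise limit.

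I would argue by induction on $k$. The base case $k=0$ is immediate since both iterates vanish. For the inductive step, observe first that the operators $S_{\beta}(t)$ and $P_{\beta}(t)$ are order-preserving: by their subordination representations \eqref{Resolvent} and \eqref{IntegralResolvent} they are integrals against the nonnegative Wright density $\Phi_{\beta}$ of the semigroup $e^{\tau t^{\beta} B}$, and the latter is positivity-preserving because $B$ generates a Markovian semigroup (so the kernel $e^{ta}$ is nonnegative and convolution with a nonnegative kernel preserves the pointwise order on $\Z$). Consequently $\varphi\leq\psi$ yields $S_{\beta}(t)\varphi\leq S_{\beta}(t)\psi$. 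Moreover, Lemma \ref{lemmaF} ensures that $y\mapsto F(x,y)$ is nondecreasing on $[0,\gamma]$, and the inductive hypothesis together with $0\leq v_k^{(i)}\leq\gamma$ (established as in the proof of Theorem \ref{MainTheorem}) yields $F(n-cs, v_k^{(1)}(n,s))\leq F(n-cs, v_k^{(2)}(n,s))$. Integrating against the positive kernel $P_{\beta}(t-s)$ and adding the contribution from the initial datum closes the induction step.

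Passing to the limit $k\to\infty$ gives $u_1(n,t)\leq u_2(n,t)$ for all $n\in\Z$ and $t\geq 0$. The only real subtlety is the order-preserving character of $S_{\beta}$ and $P_{\beta}$, which relies essentially on both the nonnegativity of the Wright density $\Phi_{\beta}$ and the Markovian nature of $e^{tB}$; both ingredients are already in place from Section 2 and constitute the same mechanism that keeps the iterates inside $\mathcal{L}_{\gamma}$ in the proof of Theorem \ref{MainTheorem}, so no new analytic input is needed.
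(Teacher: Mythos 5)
Your proposal is correct and follows essentially the same route as the paper: run the lower monotone iteration from $v_0\equiv 0$ for both initial data, use the positivity of $S_{\beta}(t)$ and $P_{\beta}(t)$ together with the monotonicity of $F$ in its second variable (Lemma \ref{lemmaF}) to propagate the ordering by induction on $k$, and pass to the limit using the convergence of the iterates to the unique mild solutions. Your explicit justification of the order-preserving property of $S_{\beta}$ and $P_{\beta}$ via the nonnegative Wright density and the Markovian semigroup is a welcome elaboration of what the paper states only as ``$S_{\beta}(t)$ is a positive operator,'' but it is not a different argument.
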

	
	\begin{proof}
		Analogously to proof of the Theorem \eqref{MainTheorem}, we consider the sequence $\{u_{1,k}\}_{k\in \N}$ and $\{u_{2,k}\}_{k\in \N}$, defined according to iterative definition \eqref{lower}. By uniqueness of solution for the system \eqref{Auxequation}, we have $u_{1,k}\to u_1$ and $u_{2,k}\to u_2$ when $n \to \infty$. Since $u_{1,0}= u_{2,0}=0$ and $F(x,0)=0$, we have
		\begin{align*}
			u_{1,1}(n,t)&=S_{\beta}(t)\varphi(n)+\int_{0}^{t}P_{\beta}(t-s)F(n-cs,u_{1,0}(n,t))\ ds\\
			&=S_{\beta}(t)\varphi(n)
		\end{align*}
		and
		\begin{align*}
			u_{2,1}(n,t)&=S_{\beta}(t)\varphi(n)+\int_{0}^{t}P_{\beta}(t-s)F(n-cs,u_{2,0}(n,t))\ ds\\
			&=S_{\beta}(t)\psi(n).
		\end{align*}
		Since $S_{\beta}(t)$ is a positive operator, we have $u_{1,1}(n,t)\leq u_{2,1}(n,t)$, for all $n \in \Z$ and $t\geq0$.
		
		Using Lemma \eqref{lemmaF}, we get
		\begin{align*}
			u_{1,2}(n,t)&=S_{\beta}(t)\varphi(n)+\int_{0}^{t}P_{\beta}(t-s)F(n-cs,u_{1,1}(n,t))\ ds\\
			&=S_{\beta}(t)\varphi(n)+\int_{0}^{t}P_{\beta}(t-s)F(n-cs,u_{2,1}(n,t))\ ds\\
			&=u_{2,2}(n,t).
		\end{align*}
		By induction principle, we obtain $u_{1,k}(n,t)\leq u_{2,k}(n,t)$ for all $k \in \N$ and therefore, $u_1(n,t)\leq u_2(n,t)$. The proof is finished.
	\end{proof}
	
	\begin{corollary}
		Assume the hypotheses of Theorem \eqref{MainTheorem}. Let $v,w\in C(\R_0^+,\mathcal{L}_{\gamma})$ such that $\mathcal{K}_{\beta}v(n,t)\geq v(n,t)$ and $\mathcal{K}_{\beta}w(n,t)\leq w(n,t)$, for all $n\in\Z$ and $t\in \mathbb{R}_0^+$. Suppose that $v(n,0)\leq w(n,0)$ for all $n\in\Z$. Then the inequality $v(n,t)\leq w(n,t)$ for all $n\in\Z$ and $t\in \R_0^+$ holds.
	\end{corollary}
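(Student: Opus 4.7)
The plan is to bracket the sub- and supersolutions $v,w$ by genuine mild solutions of \eqref{Auxequation} carrying the initial data $v(\cdot,0)$ and $w(\cdot,0)$ respectively, and then invoke Corollary \ref{Corollarycomparison1} to compare those two solutions. This mirrors the monotone-iteration strategy of Theorem \ref{MainTheorem}, the only difference being that the iteration now starts from $v$ (resp.\ $w$) itself rather than from the pointwise barrier $0$ (resp.\ $\gamma$).

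Concretely, set $v_0:=v$ and define $v_k:=\mathcal{K}_{\beta}v_{k-1}$ with $\varphi=v(\cdot,0)$ throughout. The subsolution hypothesis gives $v_1\geq v_0$. Since $F$ is non-decreasing in its second argument (Lemma \ref{lemmaF}) and $P_{\beta}(t)$ is positive---being an average of the positive Markovian operators $e^{\tau t^{\beta}B}$ against the non-negative Wright density $\Phi_{\beta}$---the map $\mathcal{K}_{\beta}$ is monotone in its functional argument, so induction yields a non-decreasing sequence $\{v_k\}$. To confirm $v_k\in\mathcal{L}_\gamma$ I would rerun the upper-barrier estimate from Theorem \ref{MainTheorem}: from $0\leq F(n-cs,v_{k-1})\leq\rho\gamma$, the majorizations \eqref{cotaresolventeB}--\eqref{cotaintegralresolvente} together with the identity $\tfrac{d}{dt}S_{\beta}^{\rho}(t)=-\rho P_{\beta}^{\rho}(t)$ deliver $v_k(n,t)\leq\gamma$; the lower bound $v_k\geq 0$ is automatic from the positivity of $S_\beta$, $P_\beta$ and $F$. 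Being monotone and uniformly bounded, $\{v_k\}$ converges pointwise to a limit $v^\ast\in C(\R_0^+,\mathcal{L}_\gamma)$, and dominated convergence inside the integral identifies $v^\ast$ with the mild solution of \eqref{Mainequation} having initial value $v(\cdot,0)$; in particular $v\leq v^\ast$.

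Running the symmetric iteration from $w_0:=w$ (now with $\varphi=w(\cdot,0)$) produces, by the same monotonicity, a non-increasing sequence in $\mathcal{L}_\gamma$ converging to the mild solution $w^\ast$ of \eqref{Mainequation} with initial value $w(\cdot,0)$, and $w\geq w^\ast$. Because $v(\cdot,0)\leq w(\cdot,0)$, Corollary \ref{Corollarycomparison1} delivers $v^\ast(n,t)\leq w^\ast(n,t)$ on $\Z\times\R_0^+$, and chaining the four inequalities $v\leq v^\ast\leq w^\ast\leq w$ concludes.

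The step I expect to require the most care is the $\mathcal{L}_\gamma$-invariance of the iterates when they no longer begin at one of the extremes $0$ or $\gamma$: verifying $v_k\leq\gamma$ (and, analogously, that $w_k$ stays above any required floor) hinges essentially on the Markovian character of $e^{tB}$, entering through the bounds \eqref{cotaresolventeB}--\eqref{cotaintegralresolvente}. Once this invariance is in place, the remainder is the mechanical monotone-iteration bookkeeping already carried out in Theorem \ref{MainTheorem}, and the only external ingredient is the comparison statement of Corollary \ref{Corollarycomparison1}, which is precisely what is needed to close the sandwich.
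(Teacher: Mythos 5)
Your proposal is correct and follows essentially the same route as the paper: iterate $\mathcal{K}_{\beta}$ starting from $v$ and $w$ to obtain a monotone increasing (resp.\ decreasing) sequence converging to mild solutions $v^{\ast}$ and $w^{\ast}$ with initial data $v(\cdot,0)$ and $w(\cdot,0)$, then apply Corollary \ref{Corollarycomparison1} and chain $v\leq v^{\ast}\leq w^{\ast}\leq w$. Your extra attention to the $\mathcal{L}_{\gamma}$-invariance of the iterates is a point the paper leaves implicit, but it does not change the argument.
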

	\begin{proof}
		We use the Lemma \eqref{lemmaF} in order to obtain
		\begin{align*}
			\mathcal{K}_{\beta}^2v(n,t)&=S_{\beta}(t)\varphi(n)+\int_{0}^{t}P_{\beta}(t-s)F(n-cs,\mathcal{K}_{\beta}v(n,t))\ ds\\
			&\geq S_{\beta}(t)\varphi(n)+\int_{0}^{t}P_{\beta}(t-s)F(n-cs,v(n,t))\ ds\\
			&=\mathcal{K}_{\beta}v(n,t).
		\end{align*}
		Applying recursively the above process, we get
		\begin{equation*}
			v(n,t)\leq \mathcal{K}_{\beta}v(n,t)\leq\mathcal{K}_{\beta}^2v(n,t)\leq\ldots\leq\lim_{k\to \infty}\mathcal{K}_{\beta}^kv(n,t):=v^*(n,t).
		\end{equation*} 
		Analogously, from Lemma \eqref{lemmaF}, we obtain
		\begin{align*}
			\mathcal{K}_{\beta}^2w(n,t)&=S_{\beta}(t)\varphi(n)+\int_{0}^{t}P_{\beta}(t-s)F(n-cs,\mathcal{K}_{\beta}w(n,t))\ ds\\
			&\leq S_{\beta}(t)\varphi(n)+\int_{0}^{t}P_{\beta}(t-s)F(n-cs,w(n,t))\ ds\\
			&=\mathcal{K}_{\beta}w(n,t).
		\end{align*}
		and proceeding inductively, we get 
		\begin{equation*}
			w(n,t)\geq \mathcal{K}_{\beta}w(n,t)\geq\mathcal{K}_{\beta}^2w(n,t)\geq\ldots\geq\lim_{k\to \infty}\mathcal{K}_{\beta}^kw(n,t):=w^*(n,t).
		\end{equation*}
		Note that $v^*$ and $w^*$ there exists, since the operator $\mathcal{K}_{\beta}$ is continuous.
		Furthermore, we remark that $v^*$ and $w^*$ are solution for the equation \eqref{Auxequation} with initial values $v(n,0)$ and $w(n,0)$ respectively. Then, the Corollary \eqref{Corollarycomparison1} yields
		\begin{equation*}
			v(n,t)\leq v^*(n,t)\leq w^*(n,t)\leq w(n,t),
		\end{equation*}
		for all $n \in \Z$ and $t\geq 0$. The proof is finished.
	\end{proof}
	
	\section{Applications and Examples}
	In this section we present convolution operators and nonlinear functions satisfying the hypotheses of Theorem \eqref{MainTheorem}. We conclude the paper with a historical review about some cases and its applications in diverse models.
	\subsection{Convolution Operators}
	
	We present a sufficient condition in order to guarantee the Markov properties for the $C_0$-semigroup generated by the fractional powers of a discrete convolution operator generating a Markovian $C_0$-semigroup.
	
	\begin{theorem}\label{Markovianfractionalpowers} Let $A \in \mathcal{B}(\ell^p(\Z))$ be an operator by discrete convolution, namely $Af(n)=(a\ast f)(n)$, which is the infinitesimal generator of Markovian semigroup $e^{tA}$ and $\alpha \in (0,1)$. Then, the semigroup generated for the fractional powers $-(-A)^{\alpha}$, namely $e^{-t(-A)^{\alpha}}$ is a Markovian semigroup.
	\end{theorem}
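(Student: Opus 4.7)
\bigskip

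\noindent\textbf{Proof proposal.} The plan is to use Bochner's subordination principle to express $e^{-t(-A)^\alpha}$ as an integral average of the original semigroup $e^{sA}$ against a non-negative probability density, and then to transfer the Markov properties of $e^{tA}$ through this averaging. Concretely, since $A \in \mathcal{B}(\ell^p(\Z))$ generates a uniformly bounded $C_0$-semigroup, the Balakrishnan/subordination machinery invoked in the paper (see the references to \cite{AM} and \cite{Ba2001}) produces a non-negative density $f_{t,\alpha} \in L^1(\R^+_0)$, expressible through the Wright function $\Phi_\alpha$, satisfying $\int_0^\infty f_{t,\alpha}(s)\,ds = 1$, and such that
\begin{equation*}
	e^{-t(-A)^\alpha} g = \int_0^\infty f_{t,\alpha}(s)\,e^{sA} g\,ds, \qquad g \in \ell^p(\Z),\ t \geq 0.
\end{equation*}
Because $A$ acts as convolution with $a$, the same is true of $e^{sA}$ with kernel $e^{sa}$, and hence $e^{-t(-A)^\alpha}$ is convolution with the kernel
\begin{equation*}
	b_{t,\alpha}(n) := \int_0^\infty f_{t,\alpha}(s)\,e^{sa}(n)\,ds, \qquad n \in \Z.
\end{equation*}

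Next, I would verify the two defining features of a Markov operator directly on $b_{t,\alpha}$. For positivity: the hypothesis that $e^{tA}$ is Markovian forces (via the remark following Definition~\ref{Markovian}) $e^{sa}(n) \geq 0$ for all $s \geq 0$ and $n \in \Z$; combined with $f_{t,\alpha} \geq 0$, Tonelli gives $b_{t,\alpha}(n) \geq 0$. For norm conservation: again by Tonelli, using \eqref{normsemigroupskernel} together with $\|e^{sa}\|_1 = 1$ (which is the Markov property of $e^{tA}$ at the level of the kernel), one computes
\begin{equation*}
	\|b_{t,\alpha}\|_1 = \sum_{n \in \Z} \int_0^\infty f_{t,\alpha}(s)\,e^{sa}(n)\,ds = \int_0^\infty f_{t,\alpha}(s)\,\|e^{sa}\|_1\,ds = \int_0^\infty f_{t,\alpha}(s)\,ds = 1.
\end{equation*}

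To conclude, take any density $g \in M$. Since $b_{t,\alpha} \geq 0$ and $g \geq 0$, we have $e^{-t(-A)^\alpha} g = b_{t,\alpha} * g \geq 0$, and
\begin{equation*}
	\|b_{t,\alpha} * g\|_1 = \sum_{n\in\Z} (b_{t,\alpha} * g)(n) = \Bigl(\sum_{n\in\Z} b_{t,\alpha}(n)\Bigr)\Bigl(\sum_{n\in\Z} g(n)\Bigr) = 1,
\end{equation*}
so $e^{-t(-A)^\alpha}(M) \subset M$, which is the Markov property. The $C_0$-semigroup property of $\{e^{-t(-A)^\alpha}\}_{t \geq 0}$ on $\ell^1(\Z)$ is standard from the subordination construction (strong continuity at $t = 0$ follows from dominated convergence and the approximate-identity behavior of $f_{t,\alpha}$ as $t \downarrow 0$). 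I expect the only delicate point to be a clean citation or brief verification of the subordination representation in the specific form used above, together with the fact that the Wright-type density $f_{t,\alpha}$ is indeed a probability density; once this is in place, the rest is a two-line Tonelli computation using the kernel-level Markov identity $\|e^{sa}\|_1 = 1$.
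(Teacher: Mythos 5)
Your proposal is correct and follows essentially the same route as the paper: both rest on the subordination formula $e^{-t(-A)^{\alpha}}=\int_0^\infty f_{t,\alpha}(s)\,e^{sA}\,ds$ with the nonnegative L\'evy probability density, deducing positivity from $f_{t,\alpha}\geq 0$ and the positivity of the kernel $e^{sa}$, and norm conservation from $\int_0^\infty f_{t,\alpha}=1$ together with $\|e^{sa}\|_1=1$. The only (cosmetic) difference is that the paper extracts the identity $\sum_n e^{-t(-a)^{\alpha}}(n)=1$ by applying the subordinated semigroup to the constant sequence $\mathbf{1}$, whereas you compute $\|b_{t,\alpha}\|_1$ directly by Tonelli on the kernel — the same interchange of sum and integral, arranged slightly more cleanly.
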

	\begin{proof}
		The key of the proof is the L\'evy subordination principle and its properties.
		Let $\phi\geq 0$ be a sequence in $\ell^p(\Z)$, $1\leq p \leq \infty$. Since $\alpha \in (0,1)$, from remark \eqref{remlevy} property i) we have
		\begin{equation*}
			e^{-t(-A)^{\alpha}}\phi(n)=\int_0^{\infty} e^{\lambda A}\phi(n) f_{t,\alpha}(\lambda) d\lambda, \quad t>0.
		\end{equation*}
		The positivity of L\'evy process (see remark \eqref{remlevy}, property ii)) and $e^{tA}$ (Markovian property) yield
		\begin{equation*}
			e^{-t(-A)^{\alpha}}\phi(n)\geq 0.
		\end{equation*}
		Let us choose $\phi(n)=\delta_{0}(n)$. A straightforward computation yield
		\begin{equation*}
			0\leq \int_0^{\infty} e^{\lambda A}\delta_{0}(n) f_{t,\alpha}(\lambda) d\lambda=e^{-t(-A)^{\alpha}}  \delta_{0}(n)= e^{-t(-a)^{\alpha}}.
		\end{equation*}
		
		Let $\textbf{1}(n)$ be the sequence constantly equals to $1$ considered in \eqref{Markovianproperty}. We use again the integral representation of L\'evy subordination principle, and \eqref{Markovianproperty} in order to obtain
		\begin{equation*}
			e^{-t(-A)^{\alpha}}\textbf{1}(n)=\int_0^{\infty} e^{\lambda A}\textbf{1}(n) f_{t,\alpha}(\lambda) d\lambda=\textbf{1}(n)\int_0^{\infty}  f_{t,\alpha}(\lambda) d\lambda=\textbf{1}(n).
		\end{equation*}
		On the other hand, since $e^{-t(-A)^{\alpha}}$ is defined by discrete convolution, we have
		\begin{equation*}
			e^{-t(-A)^{\alpha}}\textbf{1}(n)=\sum_{m \in \Z}^{}e^{-t(-a)^{\alpha}}(m)\textbf{1}(n-m)=\textbf{1}(n)\sum_{m \in \Z}^{}e^{-t(-a)^{\alpha}}(m).
		\end{equation*}
		Therefore, from above equalities, we have $\displaystyle\|e^{-(-a)^{\alpha}}\|_1= \sum_{n \in \Z}^{}e^{-t(-a)^{\alpha}}(n)=1$. From \eqref{normsemigroupskernel}, we obtain $\|e^{-(-A)^{\alpha}}\|_1=1$. The proof is finished.
		
	\end{proof}
	
	\begin{corollary} Let $\alpha \in (0,1)$. The semigroups $e^{-t(-\Delta_d)^{\alpha}}$, $e^{-t(\Delta/-\nabla)^{\alpha}}$ and $e^{-t(-\Delta_{dd})^{\alpha}}$ are Markovian.
	\end{corollary}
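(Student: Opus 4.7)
The plan is to apply Theorem \ref{Markovianfractionalpowers} to each of the three base operators $\Delta_d$, $-\Delta$ (respectively $\nabla$), and $\Delta_{dd}$. Since the fractional powers used in the statement are $(-\Delta_d)^\alpha$, $(\Delta/-\nabla)^\alpha$, and $(-\Delta_{dd})^\alpha$, and since each of these operators is a discrete convolution operator with kernel in $\ell^1(\Z)$ (as recorded in the Preliminaries), the only nontrivial task is to check that $\Delta_d$, $-\Delta$, $\nabla$, and $\Delta_{dd}$ are themselves infinitesimal generators of Markovian $C_0$-semigroups on $\ell^1(\Z)$. Once this is done, Theorem \ref{Markovianfractionalpowers} directly gives the Markovian property of $e^{-t(-\Delta_d)^\alpha}$, $e^{-t(\Delta/-\nabla)^\alpha}$, and $e^{-t(-\Delta_{dd})^\alpha}$.

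First I would use the explicit semigroup representations already recorded in the Preliminaries:
\begin{equation*}
  e^{t\Delta_d}f(n)=\sum_{m\in\Z} e^{-2t}I_{n-m}(2t)f(m),
\end{equation*}
\begin{equation*}
  e^{t(-\Delta)}f(n)=\sum_{m\in\Z} e^{-t}\tfrac{t^{-(n-m)}}{(-(n-m))!}\chi_{\Z_-}(n-m)f(m),
\end{equation*}
\begin{equation*}
  e^{t\nabla}f(n)=\sum_{m\in\Z} e^{-t}\tfrac{t^{n-m}}{(n-m)!}\chi_{\Z_+}(n-m)f(m),
\end{equation*}
\begin{equation*}
  e^{t\Delta_{dd}}f(n)=\sum_{m\in\Z} e^{-2t}I_{\frac{n-m}{2}}(2t)\chi_{2\Z}(n-m)f(m).
\end{equation*}
In each case the convolution kernel is nonnegative, since the modified Bessel functions $I_k(2t)$ with $k\in\Z$ are nonnegative and the Poisson-type weights $e^{-t}t^{k}/k!$ are nonnegative for $k\in\N_0$. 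Thus each of these semigroups is positivity preserving on $\ell^1(\Z)$, which handles one half of the Markov property.

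Next I would verify the mass-preservation $\|e^{ta}\|_1 = 1$ for each kernel $a$ above. For $\Delta_d$ this reduces to the classical generating-function identity $\sum_{k\in\Z} I_k(2t) = e^{2t}$, hence $e^{-2t}\sum_{k\in\Z} I_k(2t) = 1$. For $\Delta_{dd}$ the 2-step supporting condition reduces the sum to the even indices, which again yields $e^{-2t}\sum_{k\in\Z} I_k(2t) = 1$. For $-\Delta$ and $\nabla$ the identity $e^{-t}\sum_{k\geq 0} t^k/k! = 1$ gives the conclusion immediately. Combined with positivity and the equality $\|e^{tA}\|_1=\|e^{ta}\|_1$ from \eqref{normsemigroupskernel}, this establishes that each $e^{tA}$ maps the set $M$ of probability densities into itself, so $\Delta_d$, $-\Delta$, $\nabla$, $\Delta_{dd}$ generate Markovian $C_0$-semigroups on $\ell^1(\Z)$.

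Finally, with the hypotheses of Theorem \ref{Markovianfractionalpowers} verified, I would invoke that theorem with $A = \Delta_d$, $A = -\Delta$, $A = \nabla$, $A = \Delta_{dd}$ in turn, thereby concluding that $e^{-t(-\Delta_d)^\alpha}$, $e^{-t\Delta^\alpha}$, $e^{-t(-\nabla)^\alpha}$ and $e^{-t(-\Delta_{dd})^\alpha}$ are all Markovian. There is no real obstacle here beyond the bookkeeping of the summation identities; the main work has already been carried out in Theorem \ref{Markovianfractionalpowers} via the Lévy subordination principle, so this corollary is essentially a verification of its hypotheses for the four canonical discrete convolution operators considered in the paper.
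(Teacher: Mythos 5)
Your proposal is correct and follows essentially the same route as the paper: verify that $\Delta_d$, $-\Delta/\nabla$ and $\Delta_{dd}$ generate Markovian semigroups via positivity of the explicit kernels and the generating-function identity $\sum_{k\in\Z} I_k(2t)=e^{2t}$ (resp.\ the exponential series for the Poisson weights), then invoke Theorem \ref{Markovianfractionalpowers}. No substantive differences to report.
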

	\begin{proof}
		A straightforward computation yields that $e^{t\Delta_d}$ and $e^{t(-\Delta/\nabla)}$ are Markovian semigroups. Indeed, from generatrix formula for $I_{\nu}$ Bessel function, we have
		\begin{equation*}
			\|e^{t\Delta_d}\|_1=e^{-2t}\sum_{n \in \mathbb{Z}}^{}I_n(2t)=1
		\end{equation*}
		and
		\begin{equation*}
			\|e^{t\Delta_{dd}}\|_1=e^{-2t}\sum_{n\in \mathbb{Z}}^{}I_\frac{n}{2}\chi_{2\mathbb{Z}} (2t)=1.
		\end{equation*}
		Furthermore
		\begin{equation*}
			\|e^{t(-\Delta/\nabla)}\|_1=e^{-t}\sum_{n \in \mathbb{Z}}^{}\frac{t^{(\pm n)}}{(\pm n)!}\chi_{\mathbb{Z}_{\pm}}(n)=1.
		\end{equation*}
		The positive condition is immediate from positivity of $I_n(2t), e^{-2t}$ and $t^n$, since $t\geq 0$.
		
		Therefore, from the above proposition, we obtain the desired result.
	\end{proof}
	
	\begin{remark}
		The above result extends the result obtained by Lizama and Roncal in \cite{LiRo}, Theorem 1.3, v) in the study of the discrete fractional Laplacian and the properties of the generated semigroup. 
	\end{remark}
	
	\subsection{Nonlinear Functions}
	
	Next, we present nonlinear functions which satisfies the hypotheses of Theorem \eqref{MainTheorem}.
	\begin{enumerate}
		\item Let $a \in \R^+$ a fixed constant and define
		\begin{equation}\label{Nonlinear1}
			f(s)=s(a^2-s^2).
		\end{equation}
		It is clear that $f$ is a measurable function, the restriction $f$ belongs to $C^1(0,a)$ and $f(0)=f(a)=0$. Its partial derivatives $\partial_sf(s)=a^2-3s^2$  and $\partial_s^2f(s)=-6s<0$ gives us the concavity of $f$ on $[0,a]$. Let us choose $\rho=3a^2$. With this choice, we ensure that the inequality \eqref{rhoinequality} holds. Furthermore, its clear that $\partial_sf(x,0)$ is a no decreasing function, and $f(x,\gamma)=\gamma(r(x)-\gamma^p)=\gamma^{\frac{1}{p}}(r(x)-\gamma)\leq 0$. Therefore, the function $f$ defined in \eqref{Nonlinear1} satisfies all the hypotheses of the Theorem \eqref{MainTheorem}.
		
		\item Let $r:\R\to\R$ be continuous, no decreasing, bounded and piecewise continuously differentiable satisfying $0<r(\infty)<\infty$. In fact, we define $r(\infty)=\gamma$.
		We define the nonlinear function $f:\R\times\R^+\to\R$ as follows
		\begin{equation}\label{Nonlinear2}
			f(x,s)=s(r(x)-s).
		\end{equation}
		It is clear that $f$ is a measurable function, the restriction $f(x,\cdot)$ belongs to $C^1(0,\gamma)$ and $f(x,0)=0$, for all $x\in \R$. Its partial derivatives $\partial_sf(x,s)=r(x)-2s$  and $\partial_s^2f(x,s)=-2<0$ gives us the concavity of $f$ on $[0,\gamma]$. Let us choose $\rho=3\gamma$. With this choice, we ensure that the inequality \eqref{rhoinequality} holds. Furthermore, its clear that $\partial_sf(x,0)$ is a no decreasing function, and $f(x,\gamma)=\gamma(r(x)-\gamma)\leq 0$. Therefore, the function $f$ defined in \eqref{Nonlinear2} satisfies all the hypotheses of the Theorem \eqref{MainTheorem}.
		
		\item We can generalize the above function. Let $r:\R\to\R$ be continuous, no decreasing, bounded and piecewise continuously differentiable satisfying $0<r(\infty)<\infty$. In fact, we define $r(\infty)=\gamma^{\frac{1}{p}}$.
		Let $p\geq 1$. We define the nonlinear function $f:\R\times\R^+\to\R$ as follows
		\begin{equation}\label{Nonlinear3}
			f(x,s)=s(r(x)-s^p).
		\end{equation}
		It is clear that $f$ is a measurable function, the restriction $f(x,\cdot)$ belongs to $C^1(0,\gamma)$ and $f(x,0)=0$, for all $x\in \R$. Its partial derivatives $\partial_sf(x,s)=r(x)-(p+1)s^p$  and $\partial_s^2f(x,s)=-p(p+1)s^{p-1}<0$ gives us the concavity of $f$ on $[0,\gamma]$. Let us choose $\rho=(p+2)\gamma^p$. With this choice, we ensure that the inequality \eqref{rhoinequality} holds. Furthermore, its clear that $\partial_sf(x,0)$ is a no decreasing function, and $f(x,\gamma)=\gamma(r(x)-\gamma^p)=\gamma^{\frac{1}{p}}(r(x)-\gamma)\leq 0$. Therefore, the function $f$ defined in \eqref{Nonlinear3} satisfies all the hypotheses of the Theorem \eqref{MainTheorem}.
	\end{enumerate}
	
	\begin{remark}The model \eqref{Mainequation}, considering the fractional Laplacian and the nonlinear function \eqref{Nonlinear1} is the well-known \emph{discrete fractional generalized Fisher-KPP equation}. This includes the discrete generalized Fisher-KPP equation with delay since $f(n-ct,s)= s(r(n-ct)-s)$ with $n \in \Z$ and some constant $c>0$. The classical Fisher-KPP equation corresponds to the case where $r(x)\equiv\gamma$.
		
		When we considerate $r\equiv1$ and $p=2$, we recover the \emph{Newell-Whitehead-Segel equation}, which describes the so called Rayleigh-Benard convection. The Newell-Whitehead-Segel equation is a well-known universal equation to govern evolution of nearly one-dimensional nonlinear patterns produced by a finite-wavelength instability in isotropic two-dimensional media. More references for these applications, see \cite{Newell} and \cite{Segel}.
	\end{remark}
	
	\section[.]{Appendix}
	
	In this appendix we give the basic concepts of the special functions involved in the development of this study.
	
	\subsection{Wright function $\Phi_{\gamma}$}
	
	The Wright type function with one parameter from \cite[Formula (28)]{Go-Lu-Ma99} (see also \cite{Po1999,SKM1993,Wri}) is given by
	
	\begin{equation}\label{Wright}
		\Phi_{\alpha}(z):= \sum_{n=0}^{\infty} \frac{(-z)^n}{n! \Gamma(-\alpha n +1 -\alpha)} = \frac{1}{2\pi i}
		\int_{\gamma} \mu^{\alpha-1} e^{\mu - z \mu^{\alpha}} d\mu, \quad
		0< \alpha <1,
	\end{equation}
	where $\gamma$ is a contour which starts and ends
	at $-\infty$ and encircles the origin once counterclockwise. This special case has sometimes been called \emph{Mainardi} function.

	\begin{remark}
		{\em
			Let $z\in\mathbb C$, $t>0$ and $0<\alpha,\gamma<1$. Then the following properties hold:
			\begin{itemize}\label{RemarkWright}
				\item [(i)] $\displaystyle E_{\gamma,1}(z)=\int_{0}^{\infty}\Phi_{\gamma}(t)e^{zt}dt$.
				\item [(ii)] $\displaystyle\Phi_{\alpha}(t) \geq 0$.
				\item [(iii)] $\displaystyle\int_{0}^{\infty}\Phi_{\alpha}(t)dt =1$.
			\end{itemize}
		}
	\end{remark}
	
	It follows from (ii) and (iii) that $\Phi_{\alpha}$ is a probability density function on $\mathbb{R}_0^+$.
	Actually, the Wright function has been used for models in stochastic processes \cite{Go-Lu-Ma99, Go-Ma00}.

	\subsection{L\'{e}vy process $f_{t,\alpha}$}
	
	We present the following function, called stable L\'{e}vy process, defined for $0<\alpha<1$ by
	\begin{equation}\label{levyfunct}
		f_{t,\alpha}(\lambda) = \begin{cases}
			\displaystyle \frac{1}{2\pi i} \int_{\sigma - i \infty}^{ \sigma + i \infty} e^{ z \lambda - t z^{\alpha}} dz,  \;\;\;\; \sigma >0, \quad t>0,  &\lambda \geq 0, \\
			0  \quad &\lambda <0,
		\end{cases}
	\end{equation}
	where the branch of $z^{\alpha}$ is taken so that $\mbox{Re}(z^{\alpha})>0$ for $\mbox{Re}(z)>0.$ This branch is  single-valued in the
	$z$-plane cut along the negative real axis.
	These functions were introduced by S. Bochner \cite{Bochner1} in the study of certain stochastic processes. K. Yosida \cite{Yo80} used them systematically in the study of $C_0$-semigroups generated by fractional powers of uniformly bounded $C_0$-semigroups of linear operators.
	The L\'{e}vy functions are the density functions associated with the stable L\'evy processes in
	the rotational invariant case, and are related to the fractional Brownian motion.
	
	\begin{remark}\label{remlevy}
		{\em
			The following properties hold:
			\begin{itemize}
				\item[(i)] $\displaystyle \int_0^{\infty} e^{-\lambda a} f_{t,\alpha}(\lambda) d\lambda = e^{-ta^{\alpha}}, \quad t>0, \quad a>0,\quad  0<\alpha<1.$
				\item[(ii)] $\displaystyle f_{t,\alpha}(\lambda) \geq 0, \quad \lambda >0, \,\, t>0, \quad  0<\alpha<1.$
				\item[(iii)] $\displaystyle \int_0^{\infty}  f_{t,\alpha}(\lambda) d\lambda = 1, \quad t>0, \quad  0<\alpha<1.$
			\end{itemize}
			For a proof of (i)-(iii), see \cite[p.260-262]{Yo80}. 
		}
	\end{remark}

	\subsection{Bessel function $I_\nu$}
	For $\nu \in \mathbb{R}$, the \emph{Modified Bessel functions of the first kind} is defined by
	\begin{equation}\label{IBessel}
		I_{\nu}(x)=\sum_{n=0}^{\infty}\frac{1}{\Gamma(n+\nu+1)n!}\left(\frac{x}{2}\right)^{2n+\nu}.
	\end{equation}
	
	It is direct from the definition \eqref{IBessel} that $I_n(x) \geq 0, \quad n\in \mathbb{Z}, \,\, x\geq 0$,
	Following propierties can be founded in \cite[Formula 8.511]{GR}, \cite{AAR2006}.
	\begin{enumerate}
		\item $\displaystyle \sum_{n\in \mathbb{Z}} I_n(x)z^{n} = \displaystyle e^{\frac{x}{2}(z+\frac{1}{z})}, \quad z \in \mathbb{C}\setminus\{0\}$.
		\item $I_{-n}(x)=I_{n}(x)=(-1)^nI_n(-x)$.
		\item $\displaystyle I_n(x+y)=\sum_{k\in \mathbb{Z}}I_{n-k}(x)I_k(y)$.
	\end{enumerate}
	\newpage

\end{document}